\newtheorem{theorem}{\bf Theorem}[section]
\newtheorem{definition}{\bf Definition}[section]
\newtheorem{lemma}[theorem]{\bf Lemma}
\newtheorem{proposition}[theorem]{\bf Proposition}
\begin{document}
\begin{frontmatter}

\title{The Territorial Raider Game and Graph Derangements}

\author{%
Nina Galanter}
\address{%
Grinnell College, Grinnell, IA 50112, USA}
%[%
%galanter@grinnell.edu]

\author{%
Dennis Silva, Jr.}
\address{Department of Mathematical Sciences, Worcester Polytechnic Institute, Worcester, MA 01609, USA}
%[%
%dssilva@wpi.edu]

\author{%
Jonathan T. Rowell}
\address{Department of Mathematics and Statistics
The University of North Carolina at Greensboro, Greensboro, NC 27412, USA}
%[%
%jtrowell@uncg.edu]

\author{%
Jan Rycht\'a\v{r}}
\address{Department of Mathematics and Statistics
The University of North Carolina at Greensboro, Greensboro, NC 27412, USA}
%[%
%rychtar@uncg.edu]

\begin{abstract}
A derangement of a graph $G=(V,E)$ is an injective function  $f:V\to V$ such that for all $v\in V$, $f(v)\neq v$ and $(v,f(v))\in E$. Not all graphs admit a derangement and previous results have characterized graphs with derangements  using neighborhood conditions for subsets of $V$.  We establish an alternative criterion for the existence of derangements on a graph. We analyze strict Nash equilibria  of the biologically motivated Territorial Raider Game, a multi-player competition for resources in a spatially structured population based on animal raiding and defending behavior. We find that a graph $G$ admits a derangement if and only if there is a strict Nash equilibrium of the Territorial Raider game on $G$.
\end{abstract}

\begin{keyword}
{derangement \sep Nash equilibrium \sep game theory}
\MSC[2010]{  91A43 \sep 05C75 \sep Secondary 05C70 \sep 91A06}
\end{keyword}

\end{frontmatter}

\section{Introduction}
A set derangement is a permutation of a set's elements with no fixed points \cite{Hassani}. Similarly, a graph derangement is a permutation of the vertices of a graph which has no fixed points, with additional limitations imposed by the structure of the graph. More formally, a graph derangement is an injective function mapping all vertices of a graph to adjacent vertices \cite{Clark}. While derangements exist for all sets containing more than one element, the existence of a derangement of a graph depends on its structure \cite{Hassani}.

Tutte (1953) introduced the idea of the Q-factor of an unoriented graph \cite{Tutte}. A Q-factor is a spanning subgraph which consists of 1-regular  components (vertex pairs), and 2-regular components (cycles). A graph will have a Q-factor if and only if that graph has a derangement \cite{Clark}.

A finite graph $G$ with vertex set $V$ admits a derangement, or equivalently, has a Q-factor, if and only if, for any finite subset $W\subseteq V$, $|N(W)|\geq|W|$, where $N(W)$ is the set of all vertices adjacent to a vertex in $W$ \cite{Clark,Tutte}.

In this paper, we provide a new criterion for the existence of graph derangements.
We adapt the Territorial Raider game (see for example \cite{broom2015study, broom2012general, BroomRychtarPreprint, bruni2013analysing})
%that was first introduced by Broom and Rycht\'a\v{r} (2012) in \cite{broom2012general} and studied in more detail in \cite{bruni2013analysing}, \cite{broom2015study} and \cite{BroomRychtarPreprint}.
and establish a one-to-one correspondence between a derangement of a graph and a strict Nash equilibrium of the Territorial Raider game. This will prove that a simple, finite, undirected, connected graph $G$ admits a derangement if and only if a Territorial Raider game played on $G$ has a strict Nash equilibrium. % strategy set.
%This finding allows the use of graph structure alone to determine the existence of strict equilibria, without analyzing  specific payoffs. Alternatively, t
This game theoretical approach allows graphs to be analyzed through the implementation of multi-agent machine-learning algorithms
such as Exp3 \cite{auer1995gambling} which can potentially determine Nash equilibria, and thus the existence of derangements \cite{Hu}.

\section{Notation and preliminaries}

In this paper, any graph referred to is assumed to be simple, finite, undirected, and connected.
\begin{definition}
Let $G=(V,E)$ be a graph. A \emph{derangement} of $G$ is an injective function  $f:V\to V$ such that for all $v\in V$, $f(v)\neq v$ and $(v,f(v))\in E$.
\end{definition}

The \emph{Territorial Raider game}  is played on a graph $G=(V,E)$. Every vertex $v\in V$ is occupied by a player $I_v$ and the vertex $v$ is called the \emph{home vertex} of $I_v$. All vertices contain one unit of resources. All players must simultaneously choose whether to raid a neighboring vertex or stay home to defend against potential raiders. The object of the game is to maximize the resources obtained.

We are interested in determining the \emph{strict Nash equilibria}, which are sets of the strategies of all players such that any player will reduce their payoff by unilaterally changing their strategy \cite{Fudenberg}. We note that strict Nash equilibria must consist of pure strategies, see for example \cite{Fudenberg, Harsanyi}.
%As explanation, an individual will only consider a mix of pure strategies if they receive equal payoffs from the strategies. Thus, choosing one pure strategy instead of the mixed strategy will not result in a reduced payoff for that player.
Consequently, we will only consider pure strategies.

Formally, a \emph{strategy} for player $I_v$
is a choice of a vertex $w\in V$ such that $w=v$ or $(v,w)\in E$. An \emph{admissible function} of $G$ is a function $f:V\to V$ such that for all $v\in V$, $f(v)=v$ or $(v,f(v))\in E$. We can see that there is a one-to-one correspondence between strategy sets for the players and admissible functions of $G$. We will use $f^{-1}$ to denote the inverse of $f$ when $f$ is bijective, and the preimage of $f$ otherwise.

When all individuals move according to their strategy, they receive payoffs based on their position as well as the positions of their opponents. By staying home, a player guarantees their claim to a portion $h\in[0,1]$ of their resources. The remaining resources are then split equally between the occupants of a vertex. If a player raids, they lose all of their home resources to raiders unless no other player raids their home vertex, in which case they keep all of their home resources.

Specifically, if $f(v)=v$ (a player $I_v$ chooses to defend), that player will receive
\begin{equation} \label{eq:payoff home}
P_v(f) = h + \frac{(1-h)}{|f^{-1}(v)|}
\end{equation}
where  $|f^{-1}(v)|$ denotes the cardinality of the preimage of $f$, and thus the total number of players at vertex $v$. If $f(v)=v'\neq v$ (a player $I_v$ chooses to raid node $v'$), then the payoff is

\begin{equation} \label{eq:payoff raid}
P_v(f) =
   \begin{cases}
       1+ \frac{1-h}{|f^{-1}(v')|}  & \text{if $f^{-1}(v)=\emptyset, f(v')=v'$ (no player raids $v$ and $I_{v'}$ defends)} \\
       1+\frac{1}{|f^{-1}(v')|}       & \text{if $f^{-1}(v)=\emptyset, f(v')\neq v'$ (no player raids $v$ and $I_{v'}$ raids)} \\
      \frac{1-h}{|f^{-1}(v')|}   &\text{if $f^{-1}(v)\neq \emptyset, f(v')=v'$ (some player raids $v$ and $I_{v'}$ defends)} \\
      \frac{1}{|f^{-1}(v')|}  &\text{if $f^{-1}(v)\neq \emptyset, f(v')\neq v'$ (some player raids $v$ and $I_{v'}$ raids)}
    \end{cases}
\end{equation}

We note that in order for a strict Nash equilibrium to exist, we must have $h<1$. Indeed, for a contradiction, assume $h=1$ and that the Nash equilibrium is generated by $f:V\to V$. If $f(v) = v$ for all $v\in V$, then any individual can raid a neighbor and its payoff stays the same. If there is $v\in V$ such that $f(v)\neq v$, then individual $I_v$ can stay home, receive the payoff of $1$, and thus not reduce its payoff.

The main result of this paper is the following theorem.
\begin{theorem} \label{thm:main}
A  simple, finite, undirected, and connected graph $G$ admits a derangement if and only if a Territorial Raider game played on $G$ , with $h\in[0,1)$, has a strict Nash equilibrium strategy set.
\end{theorem}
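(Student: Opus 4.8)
The plan is to prove the two implications separately; throughout, connectedness is used to guarantee that every vertex has a neighbor (I set aside the degenerate single-vertex graph, which has no derangement and no genuine strategic choice).

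For the direction $(\Rightarrow)$, I would start from a derangement $f$ of $G$ and show the corresponding admissible function is a strict equilibrium. Since $V$ is finite, $f$ is a bijection, so $|f^{-1}(v)|=1$ for all $v$; as $f$ is fixed-point-free, every vertex is abandoned by its owner and raided exactly once, and the last line of \eqref{eq:payoff raid} then gives every player payoff exactly $1$. It remains to see that any unilateral deviation strictly hurts the deviator: switching $I_v$ to defending makes it share its home with the one remaining raider, for payoff $h+\tfrac{1-h}{2}<1$; switching $I_v$ to raiding a different neighbor $w$ leaves $w$ abandoned but now raided twice, so $I_v$ collects $\tfrac12<1$ from $w$ and nothing from its still-raided home. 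Both bounds use $h<1$, so $f$ is a strict Nash equilibrium.

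For the direction $(\Leftarrow)$, let $f$ be a strict equilibrium; the goal is to show $f$ is a derangement, and I would split on whether $f$ is injective. If $f$ is injective it is a bijection, and it cannot fix any vertex $v$: otherwise $f^{-1}(v)=\{v\}$, so $I_v$ earns exactly $1$, yet by raiding any neighbor $w$ it would keep its now-unraided home unit and also collect a positive share of $w$, beating $1$ --- contradicting equilibrium. So an injective strict equilibrium is already a derangement. If instead $f$ is not injective, then by finiteness some vertex $z$ lies outside the image of $f$, so $f^{-1}(z)=\emptyset$ and in particular $f(z)\ne z$: the vertex $z$ is abandoned by its owner and completely unraided. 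I would then fix a neighbor $y$ of $z$ (note $f(y)\ne z$) and compare $f$ with the profile $g$ in which $I_y$ raids $z$ instead. Under $g$, vertex $z$ is empty and raided only by $I_y$, who collects its whole unit of resources, so $P_y(g)=2$ if $y$'s home is unraided in $f$ and $P_y(g)=1$ if it is raided. A short check against \eqref{eq:payoff home} and \eqref{eq:payoff raid} gives the matching bounds $P_y(f)\le 2$ in the first case and $P_y(f)\le 1$ in the second (using $h+\tfrac{1-h}{2}<1$). In either case $P_y(f)\le P_y(g)$, contradicting strictness; hence $f$ is injective, and therefore a derangement. Taken together the two directions identify the derangements of $G$ with the strict equilibria of the game (as admissible functions), which is the one-to-one correspondence promised in the abstract.

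I expect the main obstacle to be finding the right deviation in the converse: the move \emph{abandon your current action and raid the empty, undefended vertex $z$}. It is available to every neighbor of $z$, and since $z$ contributes one full, undivided unit to its sole raider it always matches or beats the incumbent payoff, which is exactly what destroys strictness. Once this deviation is in hand, the remaining work --- the reduction ``$f$ not a derangement $\Rightarrow$ such a $z$ exists'' and the payoff bookkeeping above --- is routine checking against the payoff formulas.
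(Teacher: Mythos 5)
Your proof is correct, and while the forward direction coincides with the paper's own argument (under a derangement every player earns exactly $1$, and both available deviations---defending, or raiding a second target---drop the payoff strictly below $1$ once $h<1$), the converse direction takes a genuinely different and shorter route. The paper rules out non-injectivity via an auxiliary lemma (Lemma~\ref{lem:aux}) and an inductive construction: starting from a multiply-raided vertex $v_0$ it builds pairwise disjoint sets $H_0,H_1,H_2,\dots$ of raiders satisfying $|H_{k+1}|\geq 2|H_k|-1\geq 1$, which is impossible in a finite graph. You instead use the pigeonhole observation that on a finite vertex set non-injectivity forces non-surjectivity, so some $z$ has $f^{-1}(z)=\emptyset$ and hence also $f(z)\neq z$: an abandoned, unraided vertex. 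Any neighbor $y$ of $z$ (note $f(y)\neq z$, so raiding $z$ really is a change of strategy) can then deviate to raid $z$ alone and collect its full unit, doing at least as well as before---$P_y(g)=2$ against $P_y(f)\leq 2$ when no one else targets $y$, and $P_y(g)=1$ against $P_y(f)\leq 1$ otherwise, the latter using $h+\tfrac{1-h}{2}<1$---which already destroys strictness. This single local deviation replaces the paper's growing-sets induction and most of Lemma~\ref{lem:aux}; your bookkeeping in the four sub-cases checks out against \eqref{eq:payoff home} and \eqref{eq:payoff raid}, and the fixed-point case for injective $f$ is handled identically in both proofs. The one loose end, which you explicitly flagged, is the single-vertex graph, where the unique profile is vacuously a ``strict equilibrium'' under a literal reading of the definition; the paper silently assumes $|V|\geq 2$ in the same step (it needs $v$ to have a neighbor), so this is a shared convention rather than a gap in your argument.
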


\section{Proof of Theorem \ref{thm:main}}

We will first show that any derangement generates a strict Nash equilibrium (Proposition \ref{prop:derangement generates strict NE}).
Then, we will show that any strict Nash equilibrium must be generated by a derangement (Theorem \ref{thm:all equilibria come from derangements}).
Theorem \ref{thm:main} follows directly from these two results.

\begin{proposition} \label{prop:derangement generates strict NE}
If $h\in [0,1)$, then any a derangement $f$ of $G$ generates a strict Nash equilibrium of the territorial raider game on $G$.
\end{proposition}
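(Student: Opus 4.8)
The plan is to show that if $f$ is a derangement of $G$, then no player can profit by a unilateral deviation, and in fact every deviation strictly decreases the payoff. First I would record what the payoffs look like under $f$ itself: since $f$ is a derangement, it is a bijection, so every vertex $v$ has exactly one preimage, i.e. $|f^{-1}(v)|=1$ for all $v$. Moreover $f(v)\neq v$ for all $v$, so every player raids, and every home vertex is raided by exactly one player. Hence for each $v$ we are in the fourth case of \eqref{eq:payoff raid} (some player raids $v$, and the occupant $I_{f(v)}$ of the raided vertex also raids), giving $P_v(f) = \frac{1}{|f^{-1}(f(v))|} = 1$.

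Next I would fix a player $I_v$ and consider an arbitrary deviation to a strategy $w \neq f(v)$, while all other players keep playing according to $f$; call the resulting admissible function $g$, so $g(v)=w$ and $g(u)=f(u)$ for $u\neq v$. I would split into two cases. If $w = v$ (the player defends), then since $f$ is a derangement and $v$ already had one raider coming from $f^{-1}(v)$ (that player still raids $v$ under $g$), we get $|g^{-1}(v)| \ge 2$, so from \eqref{eq:payoff home}, $P_v(g) = h + \frac{1-h}{|g^{-1}(v)|} \le h + \frac{1-h}{2} < 1 = P_v(f)$, using $h<1$. If instead $w\neq v$ (the player raids a different neighbor $w\neq f(v)$), then $v$ is still raided by the player in $f^{-1}(v)$, so $g^{-1}(v)\neq\emptyset$, and we are in the third or fourth case of \eqref{eq:payoff raid}; in either case $P_v(g) \le \frac{1}{|g^{-1}(w)|} \le \frac{1}{1} $, but now the occupant $I_w$ of $w$ is still present at $w$ (either $g(w)=w$, impossible since $f(w)\neq w$, or $g(w)=f(w)\neq w$, so $I_w$ has left — wait), so $|g^{-1}(w)| \ge 2$ because $w$ already received its unique raider $f^{-1}(w)$ under $f$ and $I_v$ now joins it. Hence $P_v(g)\le \frac{1}{2} < 1 = P_v(f)$. (I would also note the degenerate possibility that $f^{-1}(w)$ is the deviating player $I_v$ itself is excluded since $f(v)\neq w$.)

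Assembling these cases shows every unilateral deviation strictly lowers the deviating player's payoff, which is exactly the definition of a strict Nash equilibrium. The main thing to be careful about — the only real obstacle — is the bookkeeping of preimages after the deviation: one must check that the vertex $I_v$ moves *to* genuinely gains an extra occupant (so its count is at least $2$), and this rests precisely on the fact that a derangement is a bijection so every vertex had exactly one incoming player beforehand. The hypothesis $h<1$ is used only in the "deviate to defending" subcase. I do not expect any subtlety beyond this careful case analysis.
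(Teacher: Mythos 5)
Your proposal is correct and follows essentially the same argument as the paper: under the derangement every player's payoff is $1$, deviating to defend yields $h+\frac{1-h}{2}<1$, and deviating to raid another neighbor $w$ yields at most $\frac{1}{2}$ because $w$'s original raider $f^{-1}(w)\neq v$ is still there. The only blemish is the momentary confusion about whether the occupant $I_w$ remains at $w$ (it does not), but you resolve it correctly by counting the incoming raider instead.
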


\begin{proof}
Under the definition of a derangement, a player $I_v$ moved to $f(v)\neq v$, $I_v$ is the only occupant of $f(v)$, and the vertex $v$ is itself raided by player $I_{f^{-1}(v)}$. The current payoff to player $I_v$ is therefore 1.
So, if player $I_v$ changes its strategy to defend, it will receive $h+\frac{1-h}{2}<1$ instead of the current payoff of $1$. If the player $I_v$ decides to raid a different vertex $w$ (which is already raided by $I_{f^{-1}(w)}$), it will receive a payoff of $\frac{1}{2}<1$. Consequently, $f$ generates a strict Nash equilibrium.
\end{proof}

In order to simplify the proof of Theorem \ref{thm:all equilibria come from derangements}, we will first prove the following Lemma \ref{lem:aux}.

\begin{lemma} \label{lem:aux}
If $f:V\to V$ generates a strict Nash equilibrium, then we cannot have all three of the following conditions satisfied:
\begin{enumerate}
\item \label{itm: 1} $f(v) = w\neq v$ ($I_v$ raids $w$),
\item \label{itm: 2} $|f^{-1}(w)| >1$ (there is another player on $w$; either $I_w$ or another raider), and
\item \label{itm: 3} $|f^{-1}(v)|\leq 1$, (the vertex $v$ itself is raided by at most one individual).
\end{enumerate}
\end{lemma}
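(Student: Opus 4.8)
The plan is to argue by contradiction: suppose $f$ generates a strict Nash equilibrium and that conditions \ref{itm: 1}, \ref{itm: 2}, and \ref{itm: 3} all hold simultaneously. I will then exhibit an alternative strategy for player $I_v$ that does not decrease its payoff, contradicting strictness. The natural candidate deviation is for $I_v$ to abandon the crowded target $w$ and instead \emph{defend its home vertex} $v$; I would also keep the option of having $I_v$ raid $v$ itself-wait, $v$ is $I_v$'s home, so "defend" is the move $f(v)=v$. The key point is that condition \ref{itm: 3} says $v$ is raided by at most one outsider, so after $I_v$ switches to defending, the number of occupants of $v$ is at most $2$.

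First I would compute $I_v$'s current payoff under $f$. Since $f(v)=w\neq v$, player $I_v$ is raiding, and by \ref{itm: 2} there are at least two players at $w$, so $|f^{-1}(w)|\ge 2$. Whether or not $v$ is raided, and whether or not $I_w$ defends, the raid payoff from \eqref{eq:payoff raid} is at most $1+\frac{1}{|f^{-1}(w)|}\le 1+\frac12=\frac32$ in the best case (no one raids $v$), but more usefully I should split on condition \ref{itm: 3}: either $|f^{-1}(v)|=1$, meaning $v$ is raided by exactly one player (necessarily someone other than $I_v$, since $f(v)=w\ne v$), or $|f^{-1}(v)|=0$, meaning no one raids $v$.

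Next I would compare with the deviation payoff when $I_v$ defends, namely $P_v = h + \frac{1-h}{|f^{-1}(v)|}$ recomputed with $v$ now among its own preimage. In the subcase $|f^{-1}(v)|=1$ (one outside raider, call it $I_u$ with $f(u)=v$), after $I_v$ defends we have two players at $v$, so $I_v$ gets $h+\frac{1-h}{2}=\frac{1+h}{2}$. Meanwhile its current raiding payoff, since $v$ \emph{is} raided ($f^{-1}(v)\neq\emptyset$), falls into the last two cases of \eqref{eq:payoff raid}: either $\frac{1-h}{|f^{-1}(w)|}\le\frac{1-h}{2}$ or $\frac{1}{|f^{-1}(w)|}\le\frac12$. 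Both of these are $\le\frac{1+h}{2}$ for $h\in[0,1)$ (indeed $\frac{1-h}{2}\le\frac{1+h}{2}$ always, and $\frac12\le\frac{1+h}{2}$ iff $h\ge0$), so the deviation does not hurt $I_v$ — contradiction. In the subcase $|f^{-1}(v)|=0$, no one currently raids $v$, so $I_v$'s raid payoff is $1+\frac{1-h}{|f^{-1}(w)|}$ or $1+\frac{1}{|f^{-1}(w)|}$; here the better deviation is for $I_v$ to instead raid $w$ as before but I already have $f(v)=w$, so instead I note $I_v$ could simply... — the cleaner move here is that \emph{some other player on $w$}, not $I_v$, should deviate, or alternatively I track that $I_v$ defending gives $h+\frac{1-h}{1}=1$ (since with $|f^{-1}(v)|=0$ originally, $I_v$ defending makes $v$'s occupancy exactly $1$), while its raid payoff is at most $1+\frac{1}{2}$; that is larger, so defending is \emph{worse} — so in this subcase I instead consider the deviation where the \emph{second occupant of $w$} leaves, which is where conditions \ref{itm: 2} and \ref{itm: 3} must be combined more carefully.

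The main obstacle I anticipate is exactly this last subcase bookkeeping: when no one raids $v$ ($|f^{-1}(v)|=0$), $I_v$ itself has no profitable-or-neutral deviation to defending, so the contradiction must come from a \emph{different} player — namely one of the other occupants of $w$ guaranteed by \ref{itm: 2} — and I must check that \emph{that} player also has a weakly-improving deviation, which may require case analysis on whether that occupant is $I_w$ defending or another raider, and on the occupancy of their own home. I expect the argument to close because any such player is sharing $w$ with at least one other and hence earning a payoff bounded by $\max\{1+\frac{1}{|f^{-1}(w)|}, h+\frac{1-h}{|f^{-1}(w)|}\}$, while moving to the now-under-raided vertex $v$ (with at most one raider) yields strictly more; making this precise for every configuration is the technical heart of the lemma.
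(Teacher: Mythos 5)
Your first subcase ($|f^{-1}(v)|=1$) is correct and is essentially the paper's own argument: since $|f^{-1}(w)|\geq 2$ and $v$ is raided, $I_v$'s current payoff is at most $\tfrac12$, while switching to defend yields $h+\tfrac{1-h}{2}\geq\tfrac12$, so the deviation is not strictly worse and strictness fails.

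The second subcase ($|f^{-1}(v)|=0$) is where your proposal has a genuine gap, and you flag it yourself without closing it. Your plan is to let ``one of the other occupants of $w$'' deviate to the unraided vertex $v$, but that occupant may be a raider $I_x$ whose home $x$ is not adjacent to $v$, in which case raiding $v$ is not an admissible strategy for it and the deviation argument collapses; your payoff bound $\max\bigl\{1+\tfrac{1}{|f^{-1}(w)|},\,h+\tfrac{1-h}{|f^{-1}(w)|}\bigr\}$ does not address this, since the obstacle is adjacency, not the size of the current payoff. The paper's resolution is to use the player $I_w$ itself rather than an arbitrary occupant of $w$: because $I_v$ raids $w$ we have $(v,w)\in E$, so raiding $v$ is admissible for $I_w$; and because $w$ is raided (by $I_v$ at least), $I_w$'s current payoff is at most $1$ no matter what $I_w$ is doing (defending gives $h+\tfrac{1-h}{|f^{-1}(w)|}<1$ since $|f^{-1}(w)|\geq 2$, and raiding from a raided home gives at most $1$). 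Deviating to raid $v$ --- a vertex whose owner is away and which no one else raids --- gives $I_w$ exactly $1$, a non-decreasing change, contradicting strictness. Note that $I_w$ need not currently occupy $w$ for this to work, which is precisely why this choice of deviator succeeds where ``the second occupant of $w$'' does not.
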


\begin{proof}
Under assumptions (\ref{itm: 1}) and (\ref{itm: 2}), the payoff to individual $I_v$ is no more than $\frac{1}{2}$. If only one player raids $v$ and $I_v$ changes its strategy to stay home, the payoff for $I_v$ will be $h + \frac{1-h}{2}\geq \frac{1}{2}$. Thus $f$ does not generate a strict Nash equilibrium.

If $v$ is not raided by any player, then the player $I_w$ can receive the payoff of $1$ (the maximal payoff in this game) by raiding $v$ (note that $I_v$ raided $w$ and so $v$ and $w$ are connected). Again, $f$ does not generate a strict Nash equilibria.
\end{proof}

Now, we can prove the second implication in Theorem \ref{thm:main}.

\begin{theorem}\label{thm:all equilibria come from derangements}
For any $h\in [0,1)$, every strict Nash equilibrium of the Territorial Raider game  on $G$ is generated by a derangement of $G$.
\end{theorem}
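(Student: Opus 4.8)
The plan is to show that a function $f$ generating a strict Nash equilibrium must in fact be a derangement, i.e. that $f(v)\neq v$ for every $v$ and that $f$ is injective; adjacency is automatic since $f$ is an admissible function. I would argue by contradiction, supposing $f$ generates a strict Nash equilibrium but is not a derangement, and then locating a player who can deviate without losing payoff.

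First I would dispose of the injectivity failure at raided vertices using Lemma \ref{lem:aux}: if some vertex $w$ has $|f^{-1}(w)|>1$ and $w$ is itself raided by at most one player (i.e. $|f^{-1}(w)|\le 1$ is violated only in one direction), the lemma forbids any raider of $w$; combined with a counting argument over $V$ this should force the ``overcrowding'' to propagate, and since $G$ is finite and connected we get a contradiction. More concretely, I would set $D=\{v: f(v)=v\}$ (defenders) and analyze the multiset of images. The key observations are: (a) by Lemma \ref{lem:aux}, whenever $I_v$ raids $w$ with $|f^{-1}(w)|>1$, we must also have $|f^{-1}(v)|\ge 2$, so $v$ is raided by at least two players — but then one of those raiders of $v$ is itself raiding a ``crowded'' vertex, and we can chase this backwards; (b) a defender $v\in D$ raided by nobody gets payoff $h+(1-h)=1$ but any neighbor could instead raid $v$ for payoff $1$, contradicting strictness — so every defender is raided by at least one player; (c) a lone defender $v\in D$ raided by exactly one player $I_u$ gives $I_u$ payoff $h+\frac{1-h}{2}<1$ when $h<1$... wait, that is $I_u$'s payoff and it need not be improvable, so instead I would note $I_v$ gets $h+\frac{1-h}{2}<1$ and check whether $I_v$ has a profitable-or-neutral deviation: $I_v$ could raid some neighbor, and if any neighbor of $v$ is unraided or is a raided non-defender, $I_v$ matches or beats $h+\frac{1-h}{2}$; pinning down when this is impossible is where the combinatorics concentrates.

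The cleanest route, which I would actually pursue, is this: show that in a strict Nash equilibrium every vertex $w$ satisfies $|f^{-1}(w)|=1$. Indeed, $\sum_{w}|f^{-1}(w)|=|V|$, so if some $|f^{-1}(w)|\ge 2$ then some $|f^{-1}(w')|=0$, meaning $w'$ is raided by nobody; if $f(w')=w'$ then as in (b) any neighbor raids $w'$ for payoff $1$ — and for this to fail to be a strict improvement the neighbor must already get $1$, which forces that neighbor to be an unraided raider sitting alone on a defended vertex, and iterating along $G$'s connectedness yields a contradiction; if instead $f(w')\neq w'$, then $I_{w'}$ raids some crowded-or-not vertex — here Lemma \ref{lem:aux} with $v=w'$, using $|f^{-1}(w')|=0\le 1$, says $|f^{-1}(f(w'))|\le 1$, and then $I_{w'}$ can switch to defending for payoff $h+\frac{1-h}{1}=1$, which is $\ge$ its current payoff (at most $1$), contradicting strictness unless its current payoff is exactly $1$, i.e. $f^{-1}(w')=\emptyset$ and $f(f(w'))=f(w')$ — but then by (b) applied appropriately we again get a deviation. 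Once $|f^{-1}(w)|=1$ for all $w$, $f$ is a bijection, and a fixed point $f(v)=v$ gives $I_v$ payoff $1$, so $I_v$ could instead raid a neighbor (which exists since $G$ is connected with $\ge 2$ vertices) for payoff $1$ as well — contradicting strictness. Hence $f$ has no fixed points and is injective, i.e. a derangement.

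The main obstacle I anticipate is handling the unraided defender case cleanly — the naive deviation ``a neighbor raids it'' only contradicts strictness if that neighbor does not already enjoy payoff $1$, and ruling out the exceptional configuration (a neighbor who is an unraided lone raider on a defended vertex) requires a propagation/connectedness argument rather than a one-line check. I would encapsulate that propagation as a short sub-lemma: in a strict Nash equilibrium there is no unraided vertex at all, proved by assuming one exists and following raiders/defenders around $G$.
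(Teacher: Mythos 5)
Your overall strategy --- pigeonhole on fiber sizes ($\sum_{w}|f^{-1}(w)|=|V|$, so a fiber of size $\geq 2$ forces an empty fiber, and the contradiction is then extracted at the empty fiber) --- is genuinely different from the paper's, which instead builds an infinite family of pairwise disjoint nonempty sets $H_{k+1}=f^{-1}(H_k)\setminus\{v_0\}$ with $|H_{k+1}|\geq 2|H_k|-1$ via Lemma \ref{lem:aux} and contradicts finiteness. Your route could in principle be shorter, but the step that closes the empty-fiber case is wrong. First, a simplification you miss: if $|f^{-1}(w')|=0$ then $w'\notin f^{-1}(w')$, so $f(w')\neq w'$ automatically; the ``unraided defender'' case that you single out as the main obstacle cannot occur here, and your claim (b) and the propagation sub-lemma are not needed at all. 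The genuine error is in the remaining case. You correctly deduce from Lemma \ref{lem:aux} that $u:=f(w')$ satisfies $|f^{-1}(u)|\leq 1$, hence $f^{-1}(u)=\{w'\}$ and $f(u)\neq u$. But you then assert that $I_{w'}$'s current payoff is at most $1$, so that switching to defending (payoff $1$) would not strictly decrease it. That payoff computation is incorrect: $I_{w'}$ has an unraided home ($f^{-1}(w')=\emptyset$) and sits alone on the abandoned vertex $u$, so by (\ref{eq:payoff raid}) its payoff is $1+\tfrac{1}{|f^{-1}(u)|}=2$. Defending drops it to $1$, a strict loss, so $I_{w'}$ has no non-losing deviation and your contradiction evaporates; the ``exceptional configuration'' you try to carve out (payoff exactly $1$) does not exist either, since any raider with an unraided home earns strictly more than $1$.

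The configuration is still contradictory, but the deviating player must be $I_u$, not $I_{w'}$: $I_u$'s home is raided by $I_{w'}$ and $I_u$ is itself raiding, so its payoff is at most $1$; since $(u,w')\in E$, $I_u$ can instead raid the abandoned, unraided vertex $w'$ and receive exactly $\tfrac{1}{1}=1$, which does not strictly decrease its payoff --- contradicting strictness. (This is essentially the second case of the proof of Lemma \ref{lem:aux}, rerun with the roles adjusted because condition (\ref{itm: 2}) fails here, so the lemma cannot simply be cited.) With that repair your argument goes through, and the final fixed-point step is fine (note the deviation there actually yields payoff strictly greater than $1$, not equal to $1$, which only strengthens the contradiction). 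As written, however, the central deviation is attributed to the wrong player with the wrong payoff, so the proof is incomplete.
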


\begin{proof}%[Proof of Theorem \ref{thm:all equilibria come from derangements}]
Let $f:V\to V$ be a function that generates a strict Nash equilibrium.
First, let us show that $f$ must be injective. For a contradiction, let there be $v_0\in V$ such that $|f^{-1}(v_0)|>1$.
Set $H_0 = \{v_0\}$  and let $H_1= f^{-1}(v_0)\setminus \{v_0\}$ be the set of home vertices of the raiders of $v_0$. Note that $|H_1|\geq 1$ and $H_0\cap H_1 = \emptyset$.
By Lemma \ref{lem:aux}, as $f$ generates a strict Nash equilibrium and satisfies conditions (\ref{itm: 1}) and (\ref{itm: 2}), condition (\ref{itm: 3}) cannot be satisfied. Thus, each vertex from $H_1$ must be raided by at least two individuals.
Set $H_2 = f^{-1}(H_1)\setminus\{v_0\}$.
Since individuals from $H_1$ raid $v_0$ (and thus do not stay at their home vertices), $H_1\cap H_2 = \emptyset$. Therefore, the sets $\{H_i\}_{i=0}^2$ are pairwise disjoint.
Again, by Lemma \ref{lem:aux}, $|H_2|\geq 2|H_1|-1\geq 1.$
By induction, once $H_k$ is given, we can set
$H_{k+1} = f^{-1}(H_k)\setminus \{v_0\}$ to get a  sequence of pairwise disjoint sets $\{H_i\}_{i=0}^\infty$ such that each vertex in $H_{k}$ is raided by individuals  from $H_{k+1}$, and by Lemma \ref{lem:aux}, $|H_{k+1}|\geq 2|H_k|-1 \geq 1.$
Such a sequence of sets is impossible in a finite graph and thus we have a contradiction with our assumption that $f$ is not injective.

Now, assume that $f$ is injective but that there is $v\in V$ such that $f(v)=v$.
Since $f$ is injective, no player raids $v$ and thus $I_v$ would improve its payoff by raiding any of its neighboring vertices. This is a contradiction with the assumption that $f$ generates a strict Nash equilibrium.
\end{proof}

Note that since any derangement of $G=(V,E)$ is
a bijective function $f$ with $f(v)\neq v$ for all $v\in V$, there are at most  $\left[\frac{|V|!}{e}\right]$ such derangements (and consequently at most $\left[\frac{|V|!}{e}\right]$ strict equilibria of the Territorial Raider game on $G$), where $[\cdot]$ indicates the nearest integer function \cite{Hassani}.
The upper bound is attained for a complete graph.

\section*{Acknowledgements}
N.G. and D.S. were undergraduate student participants of the  summer 2015 UNCG Research Experience for Undergraduate program  supported by the NSF Grant \#1359187.
The authors acknowledge the guidance of Dr. Olav Rueppell, Quinn Morris, and Catherine Payne. The authors also acknowledge  Dr. Clifford Smyth and Dr. Dewey Taylor for useful discussions on background in graph theory.

\section*{References}
\bibliographystyle{model2-names}
\bibliography{ref}

\end{document}